\DeclarePairedDelimiter\floor{\lfloor}{\rfloor}
\newcommand*\quot[2]{{^{\textstyle #1}\big/_{\textstyle #2}}}
\newcommand\blfootnote[1]{%
  \begingroup
  \renewcommand\thefootnote{}\footnote{#1}%
  \addtocounter{footnote}{-1}%
  \endgroup
}
\newtheorem{thm}{Theorem}[section]
\newtheorem{lem}[thm]{Lemma}
\title{Hamiltonian stationary maps with infinitely many singularities}
\author{Filippo Gaia}
\date{\today}
\begin{document}
%\allowdisplaybreaks

\begin{abstract}
For any $k\in \mathbb{N}$ we construct an Hamiltonian stationary Lagrangian map from a disc to $\mathbb{C}^2$ with infinitely many Schoen-Wolfson singularities which is of class $C^k$ up to the boundary and has smooth trace.
\end{abstract}
\maketitle
\blfootnote{\textup{2020} \textit{Mathematics Subject Classification}: \textup{53D12, 53C42}}
\section{Introduction}
In this notes we give a positive answer to the following question, raised in \cite{GOR}:

\vspace{2mm}
    \textit{Is it possible to construct Hamiltonian stationary Lagrangian discs with locally finite area but infinitely many Schoen-Wolfson singularities accumulating at the boundary?}
    
\vspace{2mm}
More precisely we will prove the following result:
\begin{thm}\label{thm: main theorem}
    For any $p<2$, $j\in \mathbb{N}\smallsetminus\{0\}$ there exists a weakly conformal, Hamiltonian stationary, Lagrangian map $\Phi\in C^{\sqrt{j^2+j}}( D^2, {C}^2)$ satisfying the following properties:
    \begin{enumerate}
        %\item $\Phi$ has finite Dirichlet energy,
        \item $\Phi$ has infinitely many, isolated Schoen-Wolfson singularities,
        \item $\Phi$ is continuous up to the boundary and its trace is smooth,
        \item The Lagrangian angle $g$ of $\Phi$ lies in $W^{1,p}\cap W^{1,(2,\infty)}_{\text{loc}}(D^2)$.
    \end{enumerate}
    Moreover $\Phi$ is a smooth branched immersion away from the singularities.
\end{thm}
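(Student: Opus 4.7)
My plan is to realize $\Phi$ as a composition $\phi_j\circ B$, where $\phi_j:\mathbb{C}\to\mathbb{C}^2$ is a canonical weakly conformal Hamiltonian stationary Lagrangian parametrization of the Schoen--Wolfson cone $C_{j,j+1}$ and $B:D^2\to\mathbb{C}$ is a holomorphic function with a carefully prescribed zero set. I would take $\phi_j$ as provided by the existing literature (cf.\ \cite{GOR}): it has a unique isolated Schoen--Wolfson singularity at the origin, is a smooth branched immersion on $\mathbb{C}\setminus\{0\}$, and extends across $0$ with H\"older regularity precisely $C^{\sqrt{j^2+j}}$; its Lagrangian angle $g_j$ is harmonic on $\mathbb{C}\setminus\{0\}$, has a logarithmic singularity at $0$, and lies in $W^{1,p}\cap W^{1,(2,\infty)}_{\mathrm{loc}}$ around the origin for every $p<2$.

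\textbf{Holomorphic invariance.} The structural observation driving the construction is that the class of weakly conformal Hamiltonian stationary Lagrangian maps is stable under precomposition with holomorphic functions. For any holomorphic $B:D^2\to\mathbb{C}$, the map $\Phi:=\phi_j\circ B$ is weakly conformal (as a composition of conformal maps), Lagrangian (since $\Phi^*\omega = B^*(\phi_j^*\omega)=0$), and has Lagrangian angle $g_j\circ B$, which is harmonic on $D^2\setminus B^{-1}(0)$ because $g_j$ is harmonic and $B$ is holomorphic; hence $\Phi$ is Hamiltonian stationary. At each simple zero $z_n$ of $B$, the map $\Phi$ coincides with $\phi_j$ precomposed by an affine isomorphism, producing an isolated Schoen--Wolfson singularity of the same type $(j,j+1)$, while off $\{z_n\}$ it is a smooth branched immersion. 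Assuming $B$ is smooth up to $\overline{D^2}$, the interior $C^{\sqrt{j^2+j}}$ regularity transfers directly to $\Phi$, and a change-of-variables argument shows that the local $W^{1,p}$ and $W^{1,(2,\infty)}_{\mathrm{loc}}$ bounds on $g_j$ at $0$ yield the same bounds on $g_j\circ B$ at each $z_n$, which sum to global bounds provided $\{z_n\}$ clusters toward the boundary at a sufficiently fast rate.

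\textbf{Construction of $B$ and the main obstacle.} The analytic heart of the theorem is therefore to produce a holomorphic $B:D^2\to\mathbb{C}$ which: (i) has simple zeros on a discrete sequence $\{z_n\}\subset D^2$ accumulating on $\partial D^2$; (ii) extends continuously to $\overline{D^2}$; and (iii) is such that $\phi_j\circ B$ admits a smooth boundary trace. A natural candidate is a modified Blaschke product
\[
B(z) \;=\; \exp(-h(z))\,\prod_{n}\frac{z_n-z}{1-\overline{z_n}\,z},
\]
with $\{z_n\}$ converging to a single boundary point $z_\infty\in\partial D^2$ exponentially fast (so that $\sum(1-|z_n|)<\infty$ and the Blaschke product converges smoothly off $z_\infty$) and $h$ a holomorphic outer factor whose real part diverges at $z_\infty$ in a controlled manner. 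The principal difficulty is point (iii): any continuous extension of such a $B$ to $\overline{D^2}$ must vanish at $z_\infty$, and because $\phi_j$ is only $C^{\sqrt{j^2+j}}$ at the apex, one needs $B$ to vanish at $z_\infty$ \emph{to infinite order} in order for $\phi_j\circ B|_{\partial D^2}$ to be $C^\infty$ there --- the infinite-order vanishing absorbs the finitely-differentiable contribution $|w|^{\sqrt{j^2+j}}\psi(w/|w|)$ of $\phi_j$ near $0$ into a truly smooth function on the boundary. The bulk of the technical work is the quantitative matching of the rate $1-|z_n|\downarrow 0$, the divergence rate of $\Re h$ at $z_\infty$, and the convergence of the infinite product and all its boundary derivatives, which together guarantee simultaneously the infinite-order vanishing at $z_\infty$, the smoothness of $B$ on $\overline{D^2}\setminus\{z_\infty\}$, and the summability underlying the $W^{1,p}\cap W^{1,(2,\infty)}_{\mathrm{loc}}$ estimate on $g_j\circ B$.
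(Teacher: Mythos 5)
Your proposal is essentially the paper's own construction: the paper's map $\Phi=(u,-\overline{v})$ with $u=\lvert\phi\rvert^{\sqrt{j^2+j}-j}\phi^j$ and $v=i\sqrt{j/(j+1)}\,\lvert\phi\rvert^{\sqrt{j^2+j}-(j+1)}\phi^{j+1}$ is exactly (up to a constant factor) $\Phi_{j,j+1}\circ\phi$, where $\phi=e^{-(z+1)^{-s}}\prod_{k}\frac{z-p_k}{1-p_kz}$ with $p_k=-1+e^{-k}$ is precisely your candidate $B$ --- a Blaschke product with zeros accumulating exponentially at one boundary point times an outer factor whose real part diverges there in a controlled way, so that $\phi$ vanishes to infinite order at $-1$ and the merely $C^{\sqrt{j^2+j}}$ cone profile is absorbed into a smooth boundary trace. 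The one place you should be more careful is Hamiltonian stationarity \emph{across} the singular points: harmonicity of the angle away from $B^{-1}(0)$ does not by itself exclude Dirac masses in $\operatorname{div}(i\overline{g}\nabla g)$ at the zeros (indeed $\operatorname{div}(i\overline{g}\nabla^{\perp} g)$ does pick up such masses), and the paper obtains the global distributional equation from the Cauchy--Riemann identity $-i\overline{g}\nabla g=\nabla^{\perp}\log\lvert\phi\rvert$ with $\log\lvert\phi\rvert\in W^{1,1}_{\text{loc}}$.
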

For the precise meaning of the concepts appearing in Theorem \ref{thm: main theorem} we refer to Section 2.\\
In \cite{SW}, R. Schoen and J. Wolfson showed that in a compact symplectic $4$-manifold, the Lagrangian homology is generated
by classes that can be represented by Lagrangian Lipschitz maps which
are branched immersions except at finite number of singular points, having Schoen-Wolfson cones as tangent maps.
They also provide examples of Lagrangian homology classes where the minimizer for the area 
must have singularities of this kind (see also \cite{Wolfson}). Moreover the authors say that they expect the regularity result to hold for Lagrangian stationary maps as well.\\
In \cite{rivière2023area} and \cite{pigati2024parametrized}, A. Pigati and T. Rivière studied critical points of the area functional among lagrangian surfaces by means of parametrized integer legendrian varifolds, and conjectured that any such critical point should be realised by a smooth branched immersion away from isolated Schoen-Wolfson conical singularities.\\
Theorem \ref{thm: main theorem}, in contrast, shows that an hamiltonian stationary map $\Phi$ from a disc to $\mathbb{C}^2$ which is a branched immersion away from isolated singularities might have accumulation of singular points at the boundary, even if its trace is smooth and even if $\Phi$ is assumed to be in $C^k$ for some $k\in \mathbb{N}$. It remains open if there exist a weakly conformal map $\Phi$ minimizing the area (or at least stable) among lagrangian maps with a given trace and satisfying the properties listed in Theorem \ref{thm: main theorem}.\\
 In \cite{GOR}, G. Orriols, T. Rivi\`ere and the author introduced a variational method to construct Hamiltonian stationary Lagrangian surfaces with prescribed singularities, by which several examples of Hamiltonian stationary discs in $\mathbb{C}^2$ with finitely many singularities can be constructed. It seems difficult, however, to apply directly the method of \cite{GOR} to construct examples of Hamiltonian stationary Lagrangian discs with infinitely many singularities in $W^{1,2}(D^2)$.
%\footnote{This would require the construction of a Green function with infinitely many singularities such that any super- and sub-level set of zero contain exactly one singularity.
% %a condition which seems quite difficult to verify.
% }.
Therefore the question we reported at the beginning of the Introduction remained open.\\
The construction we propose here is based on the following observation from \cite{GOR}: let $g$ be a map in $W^{1,1}(D^2,\mathbb{S}^1)$ and let $u\in W^{1,2}(D^2,\mathbb{C}^2)$ be a solution of
\begin{align}\label{eq: div-g-nabla-u}
    \operatorname{div}(g\nabla u)=0; 
\end{align}
then there exists $v\in W^{1,2}(D^2,\mathbb{C})$ such that $\nabla^\perp v=g\nabla u$ (here $\nabla^\perp:=(-\partial_{x_2}, \partial_{x_1})^T$ ) and
\begin{align}\label{eq: def-Phi}
    \Phi=\begin{pmatrix}
    u\\-\overline{v}
    \end{pmatrix}
\end{align}
is a weakly conformal, Lagrangian map with Lagrangian angle $\overline{g}$. In particular, if $g$ satisfies
\begin{align}
    \operatorname{div}(i\overline{g}\nabla g)=0,
\end{align}
then $\Phi$ is Hamiltonian stationary (for the details of this argument see the proof of Theorem \ref{thm: main theorem} in Section 3).\\
As observed in \cite{GOR} (Section 3), this method allows to construct weakly conformal Hamiltonian stationary Lagrangian maps in $W^{1,p}(D^2)$ for $p\in [1,2)$ which have infinitely many singularities, and even examples which are everywhere discontinuous: let $g$ be an everywhere discontinuous $\mathbb{S}^1$--harmonic map in $W^{1,p}(D^2)$, as constructed by L. Almeida in \cite{Almeida}. Then $u=\overline{g}$ solves \eqref{eq: div-g-nabla-u}, and by the discussion above the map $\Phi$ defined in \eqref{eq: def-Phi} satisfies the desired properties.
The map $\Phi$, however, does not lie in $W^{1,2}(D^2)$ and the surface described by $\Phi$ has infinite area ($\Phi$ is an infinite covering of the cylinder $\mathbb{S}^1\times 0\times \mathbb{R}$).\\
Geometrically, our construction will produce a branched immersion of a disc in $\mathbb{C}^2$ passing infinitely many times through the origin--where it will have its singularities-- and smooth outside of the origin. Its trace will not be an immersed smooth curve. It remains open if it is possible to construct a map as in Theorem \ref{thm: main theorem}, whose trace parametrizes an immersed smooth curve (this would be interesting for the study of the Plateau problem in the Lagrangian setting).\\
To obtain Theorem \ref{thm: main theorem} we will combine the method described above with the following analytic result.
% More precisely, we will show that there exists a % Lagrangian map $\Phi\in W^{1,2}(D^2)$ with infinitely many isolated Schoen-Wolfson type singularities (and which is a smooth branched immersion away from the singularities) whose Lagrangian angle $-\overline{g}$ satisfies
% \begin{align}
%     \operatorname{div}(ig\nabla\overline{g})=0.
% \end{align}
% Recall that in \cite{SW}, R. Schoen and J. Wolfson showed in  that least area Lagrangian maps in $W^{1,2}$ are smooth immersions away from a locally finite set of points consisting of branch points and singular points having a Schoen-Wolfson cone as tangent maps.
% Therefore Theorem \ref{thm: main theorem} will be deduced from the following analytic result.
\begin{thm}\label{thm: solving the equation}
    For any $p<2$, $j\in \mathbb{N}\smallsetminus\{0\}$ there exists a map $g\in W^{1,p}\cap W_\text{loc}^{1,(2,\infty)}(D^2,\mathbb{S}^1)\cap C^0(\overline{D^2})$ satisfying
    \begin{align}\label{eq: thm2-eqs-g}
        \operatorname{div}(i\overline{g}\nabla g)=0\quad \text{and}\quad\operatorname{div}(i\overline{g}\nabla^\perp g)=2\pi\sum_{i=1}^\infty \delta_{p_k}\text{ in }D^2
    \end{align}
    for isolated points $p_k\in D^2$, and a map $u\in C^{\sqrt{j^2+j}}(D^2,\mathbb{C})$ satisfying
    \begin{align}\label{eq: div-eq-thm}
        \operatorname{div}(g\nabla u)=0\text{ in }D^2.
    \end{align}
    Moreover, $u$ is smooth away from the points $p_i$, $\nabla u$ vanishes at isolated points and the trace of $u$ on $\partial D^2$ is smooth.
\end{thm}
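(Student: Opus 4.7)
I produce both $g$ and $u$ in closed form from a single auxiliary holomorphic function $F:D^{2}\to\mathbb{C}$ whose zero set is a prescribed sequence $\{p_{k}\}$ of simple zeros accumulating only on $\partial D^{2}$. I set $g:=F/|F|$, which is automatically $\mathbb{S}^{1}$-valued, and I look for $u$ in the separated form
\[
    u=|F|^{\alpha}\,g^{j},\qquad \alpha>0,
\]
which is single-valued because $j\in\mathbb{Z}$. The Hölder exponent $\alpha$ is to be forced by the PDE to equal $\sqrt{j^{2}+j}$.

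The algebraic step is a direct Wirtinger computation. Writing $F=|F|g$, $\bar F=|F|g^{-1}$ and using the identities $\partial_{z}|F|=F'\bar F/(2|F|)$ and $\partial_{z}g=F'/(2|F|)$, one finds
\[
    \partial_{z}u=a\,F'\,|F|^{\alpha-1}\,g^{j-1},\qquad \partial_{\bar z}u=b\,\overline{F'}\,|F|^{\alpha-1}\,g^{j+1},
\]
with $2a=\alpha+j$ and $2b=\alpha-j$. Substituting into $\operatorname{div}(g\nabla u)=2\partial_{\bar z}(g\,\partial_{z}u)+2\partial_{z}(g\,\partial_{\bar z}u)$ and simplifying gives
\[
    \operatorname{div}(g\nabla u)=(4ab+b-a)\,|F'|^{2}\,|F|^{\alpha-2}\,g^{j+1}.
\]
The equation $4ab+b-a=0$ together with $a-b=j$ admits the unique positive solution $\alpha=a+b=\sqrt{j^{2}+j}$. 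Since $|F(z)|\sim|z-p_{k}|$ near each simple zero, this gives $u\in C^{0,\sqrt{j^{2}+j}}$ around each $p_{k}$, while away from $\{p_{k}\}$ the function $u$ inherits the smoothness of $F$.

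The equations for $g$ follow from the observation that locally $g=e^{i\arg F}$ with $\arg F$ harmonic, so that $\bar g\nabla g=i\nabla\arg F$ and $\operatorname{div}(i\bar g\nabla g)=-\Delta\arg F=0$ off the zeros. Each simple zero of $F$ contributes a Dirac $2\pi\delta_{p_{k}}$ to $\operatorname{div}(i\bar g\nabla^{\perp}g)$ via the Cauchy--Riemann identity $\nabla^{\perp}\arg F=-\nabla\log|F|$ and the fundamental solution formula for the Laplacian. The pointwise bound $|\nabla g|\sim 1/|z-p_{k}|$ near each vortex then yields $g\in W^{1,p}(D^{2})$ for every $p<2$ and $g\in W^{1,(2,\infty)}_{\mathrm{loc}}(D^{2})$, provided $\{p_{k}\}$ satisfies the Blaschke-type summability $\sum_{k}(1-|p_{k}|)<\infty$. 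Moreover $\nabla u$ vanishes exactly where $F'$ does, an isolated set by the identity principle.

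The principal obstacle is to choose $F$ so that it extends continuously to $\overline{D^{2}}$ with boundary values smooth and non-vanishing: this is what yields $g\in C^{0}(\overline{D^{2}})$ and a smooth trace $u|_{\partial D^{2}}=F^{j}$ (using $|F|=1$ on the boundary). A Blaschke product with zeros $p_{k}\to p_{\infty}\in\partial D^{2}$ is analytic across $\partial D^{2}\setminus\{p_{\infty}\}$ by Schwarz reflection, but its behaviour at the accumulation point $p_{\infty}$ is delicate. My plan is to arrange the $p_{k}$ along a geometric sequence converging to $p_{\infty}$ and, if necessary, to multiply $F$ by a compensating outer or singular inner factor that cancels the oscillation of the Blaschke product at $p_{\infty}$, producing a continuous extension of $F$ to $\overline{D^{2}}$ with a smooth non-vanishing boundary trace. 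Once this function-theoretic construction is completed, all the remaining assertions of the theorem follow directly from the explicit formulas for $g$ and $u$ given above.
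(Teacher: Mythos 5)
Your algebraic core coincides with the paper's: the paper also takes $g=\phi/\lvert\phi\rvert$ and $u=\lvert\phi\rvert^{\sqrt{j^2+j}-j}\phi^j=\lvert\phi\rvert^{\sqrt{j^2+j}}g^j$ for a holomorphic $\phi$ with simple zeros at the $p_k$, and your Wirtinger computation forcing $\alpha=\sqrt{j^2+j}$ is the same identity the paper verifies in Step 5 (and quotes from Lemma V.2 of \cite{GOR}). However, the part you defer as the ``principal obstacle'' is precisely where the real content of the theorem lies, and the plan you sketch for it cannot work as stated. You ask for $F$ holomorphic on $D^2$, continuous on $\overline{D^2}$, with infinitely many zeros in $D^2$ and a \emph{non-vanishing} (indeed unimodular) smooth boundary trace. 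This is self-contradictory: if $F\in C^0(\overline{D^2})$ and $\lvert F\rvert\geq c>0$ on the compact set $\partial D^2$, then $\lvert F\rvert\geq c/2$ on a neighbourhood of $\partial D^2$, so all zeros lie in a compact subset of $D^2$ and, being isolated, are finitely many. (This is also the mechanism behind Lemma \ref{lem: no-desired-hol-fct} in Section 4.) No choice of outer or singular inner factor can rescue this; the compensating factor must instead force $F$ to \emph{vanish} at the accumulation point $p_\infty$. The paper's choice is $e^{-(z+1)^{-s}}$ times the Blaschke product, which vanishes to infinite order at $-1$; this both kills the oscillation of the Blaschke product there (giving smoothness of $\phi$ up to the boundary) and is the reason the boundary trace of $u$ can still be smooth: on $\partial D^2$ one has $u=\rho^{\sqrt{j^2+j}-j}g^j$ with $\rho=\lvert\phi\rvert$ decaying like $\exp(-c\lvert z+1\rvert^{-s})$, and it is the positive power of $\rho$ that tames the wild oscillation of $g$ near $-1$ (Step 3 of the paper). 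Your formula $u\vert_{\partial D^2}=F^j$ ``using $\lvert F\rvert=1$ on the boundary'' therefore has no analogue in any correct construction.

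Two smaller points. First, the Blaschke condition $\sum_k(1-\lvert p_k\rvert)<\infty$ is not sufficient for $g\in W^{1,p}(D^2)$: each vortex contributes roughly $(1-\lvert p_k\rvert)^{2-p}$ to $\int\lvert\nabla g\rvert^p$, and since $2-p<1$ this is a strictly stronger summability requirement; the geometric choice $1-\lvert p_k\rvert=e^{-k}$ handles all $p<2$, but the compensating factor also contributes $\lvert\nabla g\rvert\sim s\lvert z+1\rvert^{-1-s}$ near $-1$, which is why the paper must tie the parameter $s$ to $p$ via $s<\tfrac{2}{p}-1$; your sketch ignores this contribution entirely. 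Second, your verification of $\operatorname{div}(g\nabla u)=0$ is purely local away from the zeros; one still has to check that no Dirac masses appear at the $p_k$, which the paper does by showing the flux $\int_{\partial B_\varepsilon(p_k)}g\,\partial_\nu u\,\varphi\to 0$ using $\partial_\nu u=O(\lvert z-p_k\rvert^{\sqrt{j^2+j}-1})$ and $\sqrt{j^2+j}>1$. These are fixable, but the boundary-regularity construction is a genuine gap, not a routine completion.
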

We remark that the statement of Theorem \ref{thm: solving the equation} is optimal in the following sense: if $g$ is an $\mathbb{S}^1$-valued map with $\nabla g\in L^{2,q}(D^2,\mathbb{S}^1)$ for some $q<\infty$ and satisfies $\operatorname{div}(i\overline{g}\nabla g)=0$, then $g$ has no singularities (see for instance Appendix $4$ in \cite{Almeida}). The case $\nabla g\in L^{2,\infty}(D^2)$ remains open (in \cite{Almeida}, L. Almeida conjectures that if $\nabla g\in L^{2,\infty}(D^2)$, then $g$ has only finitely many singularities).\\
To prove Theorem \ref{thm: solving the equation} we will use the following observation from \cite{GOR} (Lemma V.2): if $g\in W^{1,1}$ takes values in $\mathbb{S}^1$ and $G$ is a function such that
\begin{align}
    -i\overline{g}\nabla g=\nabla^\perp G,
\end{align}
then for any $j\in \mathbb{Z}$
\begin{align}
    g^je^{-\sqrt{j^2+j}\lvert G\rvert}
\end{align}
is a solution of \eqref{eq: div-eq-thm}.
In our construction, $g$ and $G$ will be respectively the angular part and the logarithm of the modulus of an holomorphic function $\phi$ on $D^2$, which will be chosen to be smooth up to the boundary in $D^2$ and to have infinitely many zeros (which will correspond to the singularities of $u$ and $g$).\\
More generally, this method allows to construct Hamiltonian stationary Lagrangian maps starting from any holomorphic function on $D^2$.\\
Finally we remark that our method do not allow to construct maps as in Theorem \ref{thm: main theorem} satisfying in addition
\begin{align}\label{eq: Neumann-bdy-cond}
   i\overline{g}\partial_\nu g=0\text{ on }\partial D^2,
\end{align}
as we will see in Section 4 (where we will also clarify the meaning of this condition when $g\in W^{1,p}(D^2)$). The existence of such maps would be of interest because \eqref{eq: Neumann-bdy-cond} is the boundary condition satisfied by Lagrangian free-boundary surfaces when the constraint manifold is a complex surface (see \cite{Schoen-SLS}).

% We note here a couple of open questions; we will discuss the first one in Section 4.

% \begin{quest}
%     Are there examples of Hamiltonian stationary Lagrangian maps from $D^2$ to $\mathbb{C}^2$ with finite Dirichlet energy, with infinitely many Schoen-Wolfson singularities and such that the Lagrangian angle $\overline{g}$ satisfies one of the following properties (or both)?
%     \begin{enumerate}
%         \item $\displaystyle i\overline{g}\partial_\nu g=0$ on $\partial D^2$,
%         \item $\displaystyle \int_{\partial D^2}i\overline{g}\partial_\theta g=0$.
%     \end{enumerate}
%     For the precise meaning of the two conditions we refer to Section 4.
% \end{quest}  

% \begin{quest}
%     Are there examples of Hamiltonian stationary Lagrangian maps from $D^2$ to $\mathbb{C}^2$ with finite Dirichlet energy and infinitely many Schoen-Wolfson singularities which are minimizer of the area (for Hamiltonian variations) for fixed boundary data?
% \end{quest}
% \begin{quest}
%     Are there examples of Hamiltonian stationary Lagrangian maps from $D^2$ to a Calabi-Yau manifold
%     with (locally) finite Dirichlet energy and infinitely many Schoen-Wolfson singularities?
% \end{quest}

\textbf{Acknowledgements.}
    I'm very grateful to Tristan Rivi\`ere for his constant guidance and support. I'm also very grateful to Gerard Orriols and Federico Franceschini for the stimulating and instructive discussions.

\section{Preliminaries}
In the present document, we will denote by $D^2\subset\mathbb{R}^2$ the open ball of radius $1$ centered at the origin.\\
Let $\omega=dx_1\wedge dy_1+dx_2\wedge dy_2$ be the standard symplectic form on $\mathbb{C}^2$. A map $\Phi\in W^{1,1}(D^2,\mathbb{C}^2)$ is \textit{Lagrangian} if $\Phi^\ast\omega=0$ a.e..\\
A map $\Phi\in W^{1,1}(D^2,\mathbb{C}^2)$ is said to be \textit{weakly conformal} if $\langle \partial_x\Phi, \partial_y\Phi\rangle=0$ and $\lvert\partial_x\Phi\rvert^2=\lvert\partial_y\Phi\rvert^2$ a.e..\\
If $\Phi\in W^{1,2}(D^2,\mathbb{C}^2)$ is Lagrangian and weakly conformal, there exists a measurable map $g: D^2\to\mathbb{S}^1$ such that
\begin{align}\label{eq: Lagrangian angle}
    \Phi^\ast(dz_1\wedge dz_2)=e^{2\lambda}\overline{g}\,dx\wedge dy \text{ a.e.},
\end{align}
where $e^{2\lambda}=\lvert\partial_x \Phi\rvert^2=\lvert\partial_y \Phi\rvert^2$
(see for instance p. 3 in \cite{SWsurvey}). The map $\overline{g}$ is called the \textit{Lagrangian angle} of $\Phi$.\\
If $\Phi\in C^1(D^2,\mathbb{C}^2)$ is a weakly conformal Lagrangian map such that $\nabla \Phi$ vanishes at isolated points, we say that $p\in D^2$ is a \textit{branch point} of $\Phi$ if $\nabla \Phi(p)=0$ and the Lagrangian angle $\overline{g}$ of $\Phi$ is continuous at $p$. If $\Phi$ is an immersion away from isolated points in $D^2$ which are branch points of $\Phi$, we say that $\Phi$ is a \textit{branched immersion}.\\
It will be convenient to identify $\mathbb{C}^2$ with the algebra of quaternions $\mathbb{H}$ with basis elements $1,I,J,K$. In this identification, $I$ corresponds to the complex number $i$.
For any weakly conformal, Lagrangian $\Phi\in W^{1,2}(D^2,\mathbb{C}^2)$ there holds
$\partial_\theta \Phi\in \operatorname{span}\{J\partial_r \Phi, K\partial_r \Phi\}$ a.e.. Thus at a.e. $x\in D^2$ there exist $a,b\in \mathbb{R}$ such that
\begin{align}
    \frac{1}{r}\partial_\theta\Phi=(aJ\partial_r \Phi+bK\partial_r \Phi).
\end{align}
Now \eqref{eq: Lagrangian angle} implies
\begin{align}
    e^{2\lambda}\overline{g}=&\partial_r \Phi_1(a(J\partial_r\Phi)_2+b(K\partial_r\Phi)_2)- \partial_r \Phi_2(a(J\partial_r\Phi)_1+b(K\partial_r\Phi)_1)\\
    \nonumber
    =&-(a+ib)\lvert\partial_r \Phi\rvert^2,
\end{align}
where the indices $1$ and $2$ refer to the complex coordinates in $\mathbb{C}^2$.
% \begin{align}
%     e^{2\lambda}\overline{g}=\frac{1}{r}(aJ\partial_\theta u+bK\partial_\theta u)\wedge \frac{1}{r}\partial_\theta u=(a+ib)\lvert\partial_y u\rvert^2.
% \end{align}
Therefore
\begin{align}\label{eq: Lag-angle-def}
    \frac{1}{r}\partial_\theta \Phi=-\overline{g}J\partial_r \Phi \text{ a.e.,}
\end{align}
so that
\begin{align}\label{eq: structural-equation}
    \operatorname{div}(g\nabla \Phi)=0.
\end{align}
Let $\Phi\in C^\infty(\overline{D}^2,\mathbb{C}^2)$ be a smooth Lagrangian embedding with Lagrangian angle $g$; $\Phi$ is said to be \textit{Hamiltonian stationary} if for any smooth function $f: \mathbb{C}^2\to \mathbb{R}$ supported away from $\Phi(\partial D^2)$ there holds
\begin{align}
    \int_{D^2}\langle d\Phi; d(I(\nabla f)\circ\Phi)\rangle=0.
\end{align}
By \eqref{eq: structural-equation} this implies
    \begin{align}\label{eq: computation-Lemma-I}
    \nonumber
        0=&\int_{D^2}\langle d\Phi; d(I(\nabla f)\circ\Phi)\rangle=-\int_{D^2}\langle\Delta \Phi, I(\nabla f)\circ\Phi\rangle
        =-\int_{D^2}\langle i\overline{g}dg\cdot d\Phi, \nabla f\circ\Phi\rangle\\
        =&-\int_{D^2}\langle i\overline{g} dg; d(f\circ \Phi)\rangle
        =\int_{D^2}\operatorname{div}(i\overline{g}\nabla g) f\circ \Phi.
    \end{align}
As $\Phi$ is an embedding and \eqref{eq: computation-Lemma-I} holds for any smooth $f$ supported away from $\Phi(\partial D^2)$, we have $\operatorname{div}(i\overline{g}\nabla g)=0$ in $D^2$.\\
More generally, for a Lagrangian map $\Phi\in W^{1,1}(D^2,\mathbb{C}^2)$ we say that $\Phi$ is \textit{Hamiltonian stationary} if its Lagrangian angle $\overline{g}$ satisfies
\begin{align}
    \operatorname{div}(i\overline{g}\nabla g)=0.
\end{align}
In \cite{SW}, R. Schoen and J. Wolfson classified all the two-dimensional Hamiltonian stationary Lagrangian cones:
for any relatively prime positive integers $p$ and $q$, up to unitary rotation, the \textit{Schoen-Wolfson cone $\Sigma_{p,q}$} 
has the following weakly conformal parametrization
% can be conformally parametrized as follows:
\begin{align}\label{eq: Phi-p-q}
\Phi_{p,q}:\overline{D^2}\to \mathbb{C}^2,\quad r e^{i\theta}\mapsto \frac{r^{\sqrt{pq}}}{\sqrt{p+q}} \begin{pmatrix}\sqrt{q}e^{ip\theta}
\\ i\sqrt{p}e^{-iq\theta}\end{pmatrix}
\end{align}
and its Lagrangian angle is given by $e^{i(p-q)}$. Notice that $\Phi_{p,q}\in C^{\sqrt{pq}}(D^2)$, meaning that $\Phi_{p,q}\in C^{\floor{\sqrt{pq}}, \sqrt{pq}-\floor{\sqrt{pq}}}(D^2)$.\\
We remark (see Lemma V.2 in \cite{GOR}) that if $g\in W^{1,(2,\infty)}_\text{loc}(D^2)$ (meaning that $\nabla g\in L^{2,\infty}_\text{loc}(D^2)$) solves $\operatorname{div}(i\overline{g}\nabla g)=0$ and has a singularity of degree $1$ in $p$, and $\Phi\in W^{1,2}(D^2)$ is a solution of $\operatorname{div}(g\nabla\Phi)=0$, then $\Phi$ has the following form around $p$:
\begin{align}\label{eq: determining-singularity-type}
    \Phi=U\cdot\sum_{j\in \mathbb{N}\smallsetminus\{0\}}a_j\begin{pmatrix}
        \sqrt{j+1}e^{\sqrt{j(j+1)}\,G}g^j\\
        i\sqrt{j}e^{\sqrt{j(j+1)}\,G}g^{-j-1}
    \end{pmatrix}+\Phi_0,
\end{align}
where $U$ is a unitary matrix, $G$ satisfies $-i\overline{g}\nabla g=\nabla^\perp G$, $\Phi_0$ is a constant vector and $\sum_{j\in \mathbb{N}\smallsetminus\{0\}}\lvert a_j\rvert^2<\infty$.\\
We say that $\Phi$ has a \textit{Schoen-Wolfson singularity of type $\Sigma_{j,j+1}$} at $p$ if $a_j\neq 0$ and $a_k=0$ for all $k<j$.

\section{Proof of the theorems}
We first prove Theorem \ref{thm: solving the equation}. Let $p<2$ and $j\in \mathbb{N}\smallsetminus\{0\}$. In Step 1 we will construct an holomorphic map $\phi$ on $D^2$ which is smooth up to the boundary and has infinitely many zeros. In Step 2 we will construct, starting from $\phi$, an $\mathbb{S}^1$-valued map $g\in W^{1,p}\cap W_\text{loc}^{1,(2,\infty)}(D^2)$ satisfying \eqref{eq: thm2-eqs-g}. In Step 4 we will construct (again starting from $\phi$) a map $u\in C^{\sqrt{j^2+j}}(D^2, \mathbb{C})$ whose gradient only vanishes at isolated points in $D^2$, and such that its trace on $\partial D^2$ is smooth. Finally in Step 5 we will show that $u$ and $g$ satisfy \eqref{eq: div-eq-thm}.\\

Let $s\in (0,\frac{2}{p}-1)$.
For any $k\in \mathbb{N}\smallsetminus\{0\}$ set $p_k:=-1+e^{-k}$. Set
\begin{align}
    \phi:=e^{-\frac{1}{(z+1)^s}}\prod_{k\in \mathbb{N}\smallsetminus\{0\}}\left(\frac{z-p_k}{1-p_kz}\right)
\end{align}
% \begin{align}
%     \phi:=e^{(z+1)^{-s}}\prod_{k\in \mathbb{N}\smallsetminus\{0\}}\left(\frac{z-(-1+\frac{1}{k^2})}{1-(-1+\frac{1}{k^2})z}\right)
% \end{align}
in $D^2$, where $z^s$ denotes the branch of the $s$-power with branch cut $(-\infty,0)$ and equal to the real $s$-power on $[0,\infty)$.\\
\textbf{Step 1}:
$\phi$ is a well defined, holomorphic function on $D^2$ satisfying the following properties:
\begin{enumerate}
    \item the zeros of $\phi$ in $D^2$ are precisely the points $p_i$, and all zeros are of order $1$,
    \item $\phi$ extends to a continuous function on $\overline{D}^2$ which is smooth up to the boundary, its trace on $\partial D^2$ has only one zero at $-1$, which has infinite order.
\end{enumerate}
\begin{proof}[Proof of Step 1]
First we show that the Blaschke product
\begin{align}
    \mathscr{P}:=\prod_{k\in \mathbb{N}\smallsetminus\{0\}}\left(\frac{z-p_k}{1-p_kz}\right)
\end{align}
defines a holomorphic function in $D^2$:
for any $z\in D^2$, $k\in \mathbb{N}\smallsetminus\{0\}$
% \begin{align}
%     1-\frac{z-p_k}{z-q_k}=1-\frac{z-(-1+\frac{1}{k^2})}{z-(-1-\frac{1}{k^2})}=\frac{2}{k^2(z+1)+1}.
% \end{align}
% \begin{align}
%     1-\frac{z-(-1+\frac{1}{k^2})}{1-(-1+\frac{1}{k^2})z}=\frac{1}{k^2}\frac{1-z}{1+z-\frac{1}{k^2}z}.
% \end{align}
\begin{align}
    1-\frac{z-p_k}{1-p_kz}=e^{-k}\frac{1-z}{1+z-e^{-k}z}.
\end{align}
Thus the series
\begin{align}
    \sum_{k\in \mathbb{N}\smallsetminus\{0\}}\left\lvert 1-\frac{z-p_k}{1-p_kz}\right\rvert
\end{align}
converges uniformly on compact subsets of $D^2$. We deduce that $\mathscr{P}$ defines an holomorphic function on $D^2$ with zeros of order one at the points $p_k$ (see for instance Theorem 15.6 in \cite{Rudin}). Since $e^{-(z+1)^{-s}}$ is holomorphic and different from zero in $D^2$, we conclude that the same properties hold for $\phi$. Notice also that the argument above shows that $\phi$ can be extended to an analytic function in a neighbourhood of any point of $\partial D^2$ but $-1$, so that $\phi$ is smooth up to the boundary away from $-1$, and its extension on $\partial D^2\smallsetminus\{-1\}$ is nowhere zero. On the other hand, since $\lvert \mathscr{P}\rvert\leq 1$ on $D^2$, we have
\begin{align}
    \lvert \phi(z)\rvert\leq \left\lvert \exp\left(-(z+1)^{-s}\right)\right\rvert\leq \exp\left(-\cos\left(s\frac{\pi}{2}\right)\lvert z+1\rvert^{-s}\right),
\end{align}
therefore $\phi$ extends continuously on $\overline{D}^2$ and is equal to zero at $-1$.
We can repeat the same argument for the derivatives of $\phi$: for instance we compute
\begin{align}
    \partial_z\phi=&e^{-(z+1)^{-s}}\sum_{k\in \mathbb{N}\smallsetminus\{0\}}\left(\frac{2e^{-k}-e^{-2k}}{(1-p_kz)^2}\right)\prod_{i\neq k}\left(\frac{z-p_i}{1-p_iz}\right)\\
    \nonumber
    &+s(z+1)^{-s-1}e^{-(z+1)^{-s}}\mathscr{P}.
\end{align}
% \begin{align}
%     \partial_z\phi=&e^{-(z+1)^{-s}}\sum_{i\in \mathbb{N}\smallsetminus\{0\}}\left(\frac{\frac{2}{k^2}-\frac{1}{k^4}}{(1+(1-\frac{1}{k^2})z)^2}\right)\prod_{k\neq i}\left(\frac{z-(-1+\frac{1}{k^2})}{1-(-1+\frac{1}{k^2})z}\right)\\
%     \nonumber
%     &-s(z+1)^{-s-1}e^{-(z+1)^{-s}}\mathscr{P}.
% \end{align}
Notice that for any $n\in \mathbb{N}$, for any $z\in D^2$
%with $\Re z<0$
there holds
\begin{align}
    \left\lvert \frac{e^{-(z+1)^{-s}}}{(1-p_kz)^n}\right\rvert=&\frac{\lvert e^{-(z+1)^{-s}}\rvert}{\lvert p_k\rvert^n\lvert z+(1-e^{-k})^{-1}\rvert^n}\\
    \nonumber
    \leq& \frac{\exp\left(-\cos\left(s\frac{\pi}{2}\right)\lvert z+1\rvert^{-s}\right)}{\lvert p_k\rvert^n\lvert z+1\rvert^n}\\
    \nonumber
    \leq& C(s,n)\exp\left(-\frac{1}{2}\cos\left(s\frac{\pi}{2}\right)\lvert z+1\rvert^{-s}\right). 
\end{align}
% \begin{align}
%     \left\lvert \frac{e^{-(z+1)^{-s}}}{(1+(1-\frac{1}{k^2})z)^n}\right\rvert\leq \frac{\lvert e^{-(z+1)^{-s}}\rvert}{\lvert 1+z\rvert^n}\leq \frac{\exp\left(-\cos\left(s\frac{\pi}{2}\right)\lvert z+1\rvert^{-s}\right)}{\lvert z+1\rvert^n}\leq C(s,n). 
% \end{align}
Therefore $\lvert \partial_z \phi\rvert$ tends to zero as $z$ approaches $-1$, so that $\phi$ extends to a $C^1$ function in $\overline{D}^2$. A similar argument can be performed for any derivative of $\phi$, showing that $\phi$ is smooth up to the boundary and that $\phi$ has a zero of infinite order at $-1$.
\end{proof}
%that the only zero of $\phi$ on $\partial D^2$ is at $-1$ and that it has infinite order.
%%%%%%%%%%
% Next we show that $\phi$ has finite Dirichlet energy. First we compute
% \begin{align}
%     \partial_z\phi=e^{-\frac{1}{\sqrt{z+1}}}\sum_{i\in \mathbb{N}\smallsetminus\{0\}}\left(\frac{p_i-q_i}{(z-q_i)^2}\right)\prod_{k\neq i}\left(\frac{z-p_i}{z-q_i}\right)-\frac{1}{2}\frac{1}{(z+1)^\frac{3}{2}}e^{-\frac{1}{\sqrt{z+1}}}\mathscr{P}.
% \end{align}
% For any $i\in \mathbb{N}\smallsetminus\{0,1\}$ we have
% \begin{align}
%     \int_{B^c_\frac{1}{i}(-1)\cap D^2}\left\lvert \frac{p_i-q_i}{(z-q_i)^2}e^{-\frac{1}{\sqrt{z+1}}}\right\rvert^2\leq \frac{2}{i^4}\int_{B^3(q_i)\smallsetminus B_\frac{1}{2i}(q_i)}\frac{1}{\lvert z-q_i\rvert^4}\leq\frac{8}{i^2}
% \end{align}
% and
% \begin{align}
%     \int_{B_\frac{1}{i}(-1)\cap D^2}\left\lvert \frac{p_i-q_i}{(z-q_i)^2}e^{-\frac{1}{\sqrt{z+1}}}\right\rvert^2\leq e^{-2\sqrt{i}}
% \end{align}
% \end{proof}
Let's define the following functions on $\overline{D}^2$:
\begin{align}
    \rho:=\lvert \phi\rvert;\quad g:=\frac{\phi}{\lvert\phi\rvert}.
\end{align}
\textbf{Step 2}: $g$ belongs to  $W^{1,(2,\infty)}_\text{loc}(D^2)$ and to $W^{1,p}(D^2)$, and satisfies
\begin{align}\label{eq: eqs satisfied by g}
    \operatorname{div}(-i\overline{g}\nabla g)=0\quad\text{and}\quad \operatorname{div}(-i\overline{g}\nabla^\perp g)=-2\pi\sum_{k\in \mathbb{N}}\delta_{p_k}\text{ in }D^2.
\end{align}
\begin{proof}[Proof of Step 2]
    First notice that $\rho$ and $g$ are smooth away from the zeros of $\phi$. Around any zero $p_k$ of $\phi$ we can write $\phi(z)=(z-p_k)h(z)$ for some holomorphic $h$ not vanishing at $p_k$. Thus
    \begin{align}\label{eq: local-description-g}
        g(z)=\frac{z-p_k}{\lvert z-p_k\rvert}\frac{h(x)}{\lvert h(z)\rvert}
    \end{align}
    and
    \begin{align}
        \log\rho(z)=\log\lvert z-p_k\rvert+\log\lvert h(z)\rvert
    \end{align}
    around $p_k$. Since $\frac{h(x)}{\lvert h(z)\rvert}$ and $\log\lvert h(z)\rvert$ are smooth around $p_k$ we conclude that $g$ and $\rho$ belong to $W^{1,(2,\infty)}_\text{loc}(D^2)$.\\
    % Since $\phi$ has a zero of order one at all $p_i$, $\phi$ can be written as $\phi(z)=(z-p_i)h(z)$ around $p_i$, for some holomorphic $h$ not vanishing at $p_i$. Thus
    % \begin{align}\label{eq: local-description-g}
    %     g(z)=\frac{z-p_i}{\lvert z-p_i\rvert}\tilde{h}(z)
    % \end{align}
    % for some smooth map $\tilde{h}$ with values in $\mathbb{S}^1$ in a neighbourhood of $p_i$. Since $g$ is smooth away from the singularities we deduce that $g$ belongs to $W^{1,(2,\infty)}_{\text{loc}}(D^2)$. Similarly, around any zero $p_i$ of $\phi$, $\rho(z)=\lvert z-p_i\rvert\lvert h(z)\rvert$, therefore
    % \begin{align}
    %     \log\rho(z)=\log\lvert z-p_i\rvert+\log\lvert h(z)\rvert
    % \end{align}
    % around $p_i$, and since $\rho$ is smooth away from the zeros of $\phi$, $\log\rho\in W^{1,(2,\infty)}_{\text{loc}}(D^2)$.\\
    As $\phi$ is holomorphic, by the Cauchy-Riemann equations there holds
    \begin{align}\label{eq: CR-for-g-rho}
        -i\overline{g}\nabla g=\nabla^\perp\log\rho.
    \end{align}    
    % away from the zeros of $\phi$, so that
    % \begin{align}\label{eq: g-S1-harmonic}
    %     \operatorname{div}(-i\overline{g}\nabla g)=\operatorname{div}(\nabla^\perp \log\rho)=0
    % \end{align}
    % away from the zeros. Thank to the local description \eqref{eq: local-description-g} we see that \eqref{eq: g-S1-harmonic} holds across the zeros:
    % consider without loss of generality a zero located at the origin; then
    % \begin{align}
    %     \operatorname{div}(-i\overline{g}\nabla g)=\Delta \theta-i\Delta \log \tilde h=0
    % \end{align}
    % since $\log\tilde h$ is holomorphic and $\Delta \theta=0$.
    In particular we have
    \begin{align}
        \operatorname{div}(-i\overline{g}\nabla g)=\operatorname{div}(\nabla^\perp\log\rho)=0.
    \end{align}   
    To deduce the second identity in \eqref{eq: eqs satisfied by g}, notice that locally away from the singularities $g$ can be written as $g=e^{i\beta}$ for some harmonic function $\beta$. Thus
    \begin{align}
        \operatorname{div}(-i\overline{g}\nabla^\perp g)=\operatorname{div}(\nabla^\perp\beta)=0
    \end{align}
    away from the zeros of $\phi$.
    Around any zero $p_k$ of $\phi$, writing again $\rho(z)=\lvert z-p_k\rvert\lvert h(z)\rvert$ for $h$ holomorphic with $h(p_k)\neq 0$, we get
    \begin{align}
        \operatorname{div}(-i\overline{g}\nabla^\perp g)(z)=-\operatorname{div}(\nabla\log\rho)(z)=-\Delta \log \lvert z-p_k\rvert-\Delta\log\lvert h(z)\rvert
    \end{align}
    around $p_k$. As $h$ is holomorphic and different from zero, $\log\lvert h\rvert$ is harmonic around $p_k$, therefore
    \begin{align}
         \operatorname{div}(-i\overline{g}\nabla^\perp g)=-\Delta\log\lvert z-p_i\rvert=-2\pi \delta_{p_k}
    \end{align}
    around $p_k$. This completes the proof of \eqref{eq: eqs satisfied by g}.\\
    Next we show that $g\in W^{1,p}(D^2)$: observe that
    % \begin{align}
    %     \left\lvert\nabla \prod_{i\in \mathbb{N}\smallsetminus\{0\}}\frac{z-p_i}{\lvert z-p_i\rvert}\frac{\lvert z-q_i\rvert}{z-q_i}\right\rvert\leq \sum_{i\in \mathbb{N}\smallsetminus\{0\}}\left\lvert\frac{x-p_i}{\lvert x-p_i\rvert^2}-\frac{x-q_i}{\lvert x-q_i\rvert^2}\right\rvert.
    % \end{align}
    % \left\lvert\nabla \prod_{i\in \mathbb{N}\smallsetminus\{0\}}\frac{ \left(\frac{z-(-1+\frac{1}{k^2})}{1-(-1+\frac{1}{k^2})z}\right)}{\left\lvert  \left(\frac{z-(-1+\frac{1}{k^2})}{1-(-1+\frac{1}{k^2})z}\right)\right\rvert}\right\rvert    
        % \begin{align}
        % \lvert \nabla\mathscr{P}\rvert\leq \sum_{i\in \mathbb{N}\smallsetminus\{0\}}\left\lvert\frac{z-(-1+\frac{1}{k^2})}{\lvert z-(-1+\frac{1}{k^2})\rvert^2}-\frac{z-\frac{1}{-1+\frac{1}{k^2}}}{\lvert z-\frac{1}{-1+\frac{1}{k^2}}\rvert^2}\right\rvert.
%    \end{align}
    \begin{align}
        \left\lvert \nabla\left(\frac{\mathscr{P}}{\lvert\mathscr{P}\rvert}\right)\right\rvert\leq \sum_{k\in \mathbb{N}\smallsetminus\{0\}}\left\lvert\frac{z-p_k}{\lvert z-p_k\rvert^2}-\frac{z-\frac{1}{p_k}}{\lvert z-\frac{1}{p_k}\rvert^2}\right\rvert
    \end{align}
    and that for any $i\in \mathbb{N}\smallsetminus\{0\}$
    \begin{align}\label{eq: estimate-dipole}
        \left\lvert\frac{z-p_k}{\lvert z-p_k\rvert^2}-\frac{z-\frac{1}{p_k}}{\lvert z-\frac{1}{p_k}\rvert^2}\right\rvert\leq 3\frac{\lvert p_k-\frac{1}{p_k}\rvert}{\lvert z-p_k\rvert^2}
    \end{align}
    for any $z\in D^2$.
    A direct computation (considering separately the integral over $B_{2e^{-k}}(-1)\cap D^2$ and the one over $B_{2e^{-k}}(-1)^c\cap D^2$, where we use \eqref{eq: estimate-dipole}) shows that for any $k\in \mathbb{N}\smallsetminus\{0\}$
    % \begin{align}
    %     \int_{D^2} \left\lvert\frac{z-(-1+\frac{1}{k^2})}{\lvert z-(-1+\frac{1}{k^2})\rvert^2}-\frac{z-\frac{1}{-1+\frac{1}{k^2}}}{\lvert z-\frac{1}{-1+\frac{1}{k^2}}\rvert^2}\right\rvert^p\leq C\frac{1}{k^2}
    % \end{align}
        \begin{align}
        \int_{D^2} \left\lvert\frac{z-p_k}{\lvert z-p_k\rvert^2}-\frac{z-\frac{1}{p_k}}{\lvert z-\frac{1}{p_k}\rvert^2}\right\rvert^p\leq 16e^{-(2-p)k}.
    \end{align}
    Therefore $\frac{\mathscr{P}}{\lvert \mathscr{P}\rvert}\in W^{1,p}(D^2)$.\\
    On the other hand
    \begin{align}
        \left\lvert\nabla\left( \frac{\exp(-(z+1)^{-s})}{\lvert\exp(-(z+1)^{-s})\rvert }\right)\right\rvert=\left\lvert\nabla\exp(-i\Im(z+1)^{-s})\right\rvert=\lvert\nabla\Im(z+1)^{-s}\rvert\leq s\lvert z+1\rvert^{-1-s},
    \end{align}
    so that
    \begin{align}
        \int_{D^2} \left\lvert\nabla\left(\frac{\exp((-z+1)^{-s})}{\lvert\exp(-(z+1)^{-s})\rvert }\right)\right\rvert^p\leq s^p\int_{B_2(-1)}\lvert z+1\rvert^{-p(1+s)}=s^p\frac{2^{2-p(1+s)}}{2-p(1+s)}
    \end{align}
    (notice that $2-p(1+s)>0$). We conclude that
    \begin{align}
        g=\frac{\mathscr{P}}{\lvert \mathscr{P}\rvert}\frac{\exp(-(z+1)^{-s})}{\lvert\exp(-(z+1)^{-s})\rvert }\in W^{1,p}(D^2).
    \end{align} 
    % \begin{align}
    %     \int_{B_\frac{2}{k^2}(-1)}\left\lvert\frac{x-p_i}{\lvert x-p_i\rvert^2}-\frac{x-q_i}{\lvert x-q_i\rvert^2}\right\rvert^p\leq 2^p\int_{B_\frac{3}{i^2}(p_i)}\frac{1}{\lvert x-p_i\rvert^p}=2^p\left(\frac{3}{i^2}\right)^{2-p}
    % \end{align}
    % To see what happen around a zero $z_0$, assume again that $z_0$ is located at the origin. Notice that by \eqref{eq: local-description-g}
    % \begin{align}
    %     -\overline{g} \nabla^\perp g=\nabla^\perp \theta+\nabla^\perp \log\tilde h
    % \end{align}
    % around the origin. Since $\nabla^\perp \theta=-\nabla\log\lvert x\rvert$ we have
    % \begin{align}
    %     \operatorname{div}(-\overline{g} \nabla^\perp g)=-\Delta\log\lvert x\rvert+\operatorname{div}(\nabla^\perp \log\tilde h)=-2\pi\delta_{z_0}.
    % \end{align}
    % % The local description \eqref{eq: local-description-g} implies that $g$ satisfies $\operatorname{div}(-i\overline{g}\nabla^\perp g)=2\pi \delta_{p_i}$ around any zero $p_i$ of $\phi$.
\end{proof}

\textbf{Step 3}: The functions $\rho$ and $\rho^\delta g$ are smooth on $\partial D^2$ for any $\delta>0$.
\begin{proof}[Proof of Step 3]
    Both the angular and radial parts of $e^{(z+1)^{-s}}$ are clearly smooth and different from zero on $\partial D^2$ away from $-1$. Near $-1$ we have
    \begin{align}
        \exp(-(z+1)^{-s})=\exp(-\lvert z+1\rvert^{-s} e^{-i s\varphi}),
    \end{align}
where
\begin{align}
    \varphi=\begin{cases}
        \frac{\theta}{2}&\text{ if }\theta\in (\frac{\pi}{2},\pi)\\
        \frac{\theta}{2}-\pi&\text{ if }\theta\in (\pi,\frac{3}{2}\pi).
    \end{cases}
\end{align}
Since $\lvert \mathscr{P}\rvert=1$ on $\partial D^2$, near $-1$ we have (for $0\leq\theta<\pi$, the other case being analogous)
\begin{align}
    \rho(e^{i\theta})=\exp\left(-(2+2\cos\theta)^{-\frac{s}{2}}\cos\left(\frac{s\theta}{2}\right)\right)
\end{align}
and
\begin{align}
    \rho^\delta g(e^{i\theta})
    =\mathscr{P} \exp\left(i(2+2\cos\theta)^{-\frac{s}{2}}\sin\left(s\frac{\theta}{2}\right)\right)\exp\left(-\delta(2+2\cos\theta)^{-\frac{s}{2}}\cos\left(\frac{s\theta}{2}\right)\right).
\end{align}
Arguing as in Step 1 we see that $\rho\vert_{\partial D^2}$ and $\rho^\delta g\vert_{\partial D^2}$ are smooth also around $-1$.
    % On the other hand we observe that $\exp(-(e^{(z+1)^{-s}})$ is smooth on $\partial D^2\smallsetminus\{-1\}$ and any of its derivatives in $\overline{D^2}\smallsetminus\{-1\}$ will be a linear combination of terms of the following form
    % \begin{align}
    %     (z+1)^{-s-n}e^{(z+1)^{-s}}.
    % \end{align}
    % In particular we see that all derivatives of $\exp(-(e^{(z+1)^{-s}}$ extend continuously to $-1$. We conclude that 
\end{proof}

\textbf{Step 4}: Set 
\begin{align}\label{eq: expression-u}
    u:=\rho^{\sqrt{j^2+j}-j}\phi^j
\end{align}
Then $u\in C^{\sqrt{j^2+j}}(D^2)$ and away from the points $p_i$, $u$ is smooth and $\nabla u$ vanishes at isolated points.
Moreover $u$ extends to a continuous map on $\overline{D^2}$ and its trace on $\partial D^2$ is smooth.
    \begin{proof}[Proof of Step 4]
    Notice first that the map
    \begin{align}
        \mu_j: \mathbb{R}^2\to\mathbb{R}^2,\quad x\mapsto \lvert x\rvert^{\sqrt{j^2+j}-j}x^j
    \end{align}
    belongs to $C^{\sqrt{j^2+j}}(\mathbb{R}^2)$. Since $\phi$ is smooth up to the boundary, we have $u=\mu_j\circ \phi\in C^{\sqrt{j^2+j}}(D^2)$. As $\mu_j$ is smooth away from zero and $\phi$ is holomorphic, $u$ is smooth away from the points $p_i$.    
    % $\rho^{\sqrt{j^2+j}}$ is locally Lipschitz (as $\rho$ is the modulus of a smooth, bounded function), therefore $u\in W^{1,(2,\infty)}_{\text{loc}}(D^2)$ as the product of a locally Lipschitz function and $g\in W^{1,(2,\infty)}_{\text{loc}}(D^2)$. To estimate $\lvert\nabla u\rvert$ we notice that since $\phi$ is holomorphic, there holds $\nabla g\cdot \nabla \rho=0$ a.e. in $D^2$, so that
    %     \begin{align}\label{eq: gradient-phi-squared}
    %         \lvert\nabla \phi\rvert^2=\rho^2\lvert\nabla g\rvert^2+\lvert\nabla \rho\rvert^2
    %     \end{align}
    %     a.e. in $D^2$. Therefore
    %     \begin{align}\label{eq: comparison gradients phi}
    %         \lvert\nabla u\rvert^2=\left\lvert\rho^{\sqrt{2}} \nabla g+\sqrt{2}\rho^{\sqrt{2}-1}g\nabla\rho \right\rvert^2=\rho^{2\sqrt{2}}\lvert\nabla g\rvert^2+2\rho^{2(\sqrt{2}-1)}\lvert\nabla\rho\rvert^2\leq C\lvert\nabla\phi\rvert^2
    %     \end{align}
    %     a.e. in $D^2$, for a constant $C$ depending on $\lVert \phi\rVert_{L^\infty}$. As $\phi\in W^{1,2}(D^2)$ (since it is smooth up to the boundary), we also have that $u\in W^{1,2}(D^2)$.\\
        %Since $\phi$ is holomorphic, $\rho^{\sqrt{2}-1}$ and $g$ are smooth away from the zeros of $\phi$, so that $u$ is smooth away from the points $p_i$.
    Moreover we notice that away from the points $p_i$, $\nabla u$ vanishes if and only if $\nabla \phi$ vanishes. As $\phi$ is holomorphic and non-constant, its gradient only vanishes at isolated points, therefore the same holds for $u$.\\
    Finally we notice that by Step 3, $u=\rho^{\sqrt{j^2+j}-j}g^j$ is smooth on $\partial D^2$.
        % We also observe that since on $\partial D^2$ $\lvert\mathscr{P}\rvert= 1$ a.e., $\rho=\lvert e^{-(z+1)^{-s}}\rvert$, so that $\rho^{\sqrt{2}-1}$ is smooth on $\partial D^2$. Since $\phi$ is smooth on $\partial D^2$, as observed in Step 1, we conclude that $u=\phi\rho^{\sqrt{2}-1}$ is smooth on $\partial D^2$. 
    \end{proof}

\textbf{Step 5}: There holds
\begin{align}\label{eq: div(g-nabla-u)}
    \operatorname{div}(g\nabla u)=0\text{ in }D^2.
\end{align}
\begin{proof}[Proof of Step 5]
Let $\varphi\in C^\infty_c(D^2,\mathbb{C})$. As the points $p_k$ are isolated in $D^2$, there are only finitely many of them in $\operatorname{supp}(\varphi)$. Let $I\subset \mathbb{N}$ be the finite index set of the points $p_k$ in $\operatorname{supp}(\varphi)$. As $u\in W^{1,2}(D^2)$ there holds
\begin{align}\label{eq: comp-g-nabla-phi}
    \langle\operatorname{div}(g\nabla u),\varphi\rangle=&-\int_{D^2}g\nabla u\cdot\nabla\varphi=-\lim_{\varepsilon\to 0} \int_{D^2\smallsetminus\bigcup_{k\in I}B_\varepsilon(p_k)}g\nabla u\cdot \nabla\varphi\\
    \nonumber
    =&\lim_{\varepsilon\to 0}\int_{\bigcup_{k\in I}\partial B_\varepsilon(p_k)}g\partial_\nu u\varphi+\lim_{\varepsilon\to 0}\int_{D^2\smallsetminus\bigcup_{k\in I}B_\varepsilon(p_k)}\operatorname{div}(g\nabla u)\varphi.
\end{align}
Here $\nu$ denotes the unit outward pointing vector of $\partial B_{\varepsilon}(p_k)$.
Recall that around any $p_k$ we have $u(z)=(z-p_k)^j\lvert z-p_k\rvert^{\sqrt{j^2+j}-j}w(z)$ for some smooth $w$ with $w(p_k)\neq 0$.
Thus for any $\varepsilon>0$, on $\partial B_\varepsilon(p_k)$ we have
\begin{align}
    \partial_\nu u=\sqrt{j^2+j}\left(\frac{z-p_k}{\lvert z-p_k\rvert}\right)^j\lvert z-p_k\rvert^{\sqrt{j^2+j}-1}w(z)+\left(\frac{z-p_k}{\lvert z-p_k\rvert}\right)^j\lvert z-p_i\rvert^{\sqrt{j^2+j}}\partial_\nu w(z),
\end{align}
so that
\begin{align}
    \lim_{\varepsilon\to 0}\int_{\partial B_\varepsilon(p_i)}g\partial_\nu u\varphi=0
\end{align}
and the first term on the right hand side of \eqref{eq: comp-g-nabla-phi} vanishes. To evaluate the second term on the right hand side of \eqref{eq: comp-g-nabla-phi} we compute, away from the points $p_i$,
\begin{align}\label{eq: computation-div-I}
    \operatorname{div}(g\nabla u)=&\operatorname{div}(g^{j+1}\nabla \rho^{\sqrt{j^2+j}}+j\rho^{\sqrt{j^2+j}}g^j\nabla g)\\
    \nonumber
    =&(j+1)\sqrt{j^2+j}\rho^{\sqrt{j^2+j}-1}g^j\nabla g\cdot\nabla\rho+g^{j+1}\Delta \rho^{\sqrt{j^2+j}}\\
    \nonumber
    &+j \sqrt{j^2+j}\rho^{\sqrt{j^2+j}-1}\nabla\rho\cdot g^j\nabla g+j \rho^{\sqrt{j^2+j}}\operatorname{div}(g^j\nabla g).
\end{align}
Notice that since $\nabla g\cdot\nabla\rho=0$, the first and the third terms vanish.
We have
\begin{align}\label{eq: computation-div-II}
    g^{j+1}\Delta \rho^{\sqrt{j^2+j}}=&g^{j+1}\operatorname{div}(\sqrt{j^2+j}\rho^{\sqrt{j^2+j}}\nabla\log\rho)\\
    \nonumber =&(j^2+j)g^{j+1}\rho^{\sqrt{j^2+j}}\lvert\nabla \log\rho\rvert^2+\sqrt{j^2+j}g^{j+1}\rho^{\sqrt{j^2+j}}\Delta\log\rho.
\end{align}
The second term vanishes because $\log \rho$ is harmonic away from the points $p_k$ (since $\phi$ is holomorphic).
On the other hand, since $\operatorname{div}(\overline{g}\nabla g)=0$ we have
\begin{align}\label{eq: computation-div-III}
    j\rho^{\sqrt{j^2+j}} \operatorname{div}(g^j\nabla g)=&j\rho^{\sqrt{j^2+j}}\operatorname{div}(g^{j+1}\overline{g}\nabla g)\\
    \nonumber
    =&-j(j+1)\rho^{\sqrt{j^2+j}}g^{j+1}(i\overline{g}\nabla g)\cdot (i\overline{g}\nabla g)\\
    \nonumber
    =&-(j^2+j)\rho^{\sqrt{j^2+j}}g^{j+1}\lvert\nabla\log\rho\rvert,
 \end{align}  
%  \begin{align}
% &\rho^{\sqrt{2}}\operatorname{div}(ig^2\nabla^\perp\log\rho)
%     =2i\rho^{\sqrt{2}}g\nabla g\cdot\nabla^\perp\log\rho\\
%     \nonumber
%     =&2\rho^{\sqrt{2}}g^2\overline{g}\nabla g\cdot\overline{g}\nabla g=-2\rho^{\sqrt{2}}g^2\lvert\nabla\log\rho\rvert^2,
% \end{align}
where we used that by \eqref{eq: CR-for-g-rho} $\lvert i\overline{g}\nabla g\rvert=\lvert\nabla\log\rho\rvert$.
Combining \eqref{eq: computation-div-I}, \eqref{eq: computation-div-II} and \eqref{eq: computation-div-III} we
see that $\operatorname{div}(g\nabla u)=0$ away from the points $p_k$. Thus
\eqref{eq: comp-g-nabla-phi} implies that $\langle\operatorname{div}(g\nabla u),\varphi\rangle=0$. As this holds for any $\varphi\in C^\infty_c(D^2)$, the claim follows.
\end{proof}
This concludes the proof of Theorem \ref{thm: solving the equation}.
% \begin{rem}
%     Explicit expression for the map
% \end{rem}
% \begin{rem}
%     As observed in the Introduction, a similar construction can be obtained  setting $u=\rho^{\sqrt{j^2+j}}g^j$, for any $j\in \mathbb{Z}\smallsetminus\{0,-1\}$.
% \end{rem}
Next we apply Theorem \ref{thm: solving the equation} to prove Theorem \ref{thm: main theorem}.
\begin{proof}[Proof of Theorem \ref{thm: main theorem}] Let $p<2$ and $j\in \mathbb{N}\smallsetminus\{0\}$ and let $g$ and $u$ be as in Theorem \ref{thm: solving the equation}. Since $\operatorname{div}(g\nabla u)=0$, there exists $v\in W^{1,\infty}(D^2, \mathbb{C})$ such that $\nabla^\perp v=g\nabla u$.
Set
\begin{align}
    \Phi=\begin{pmatrix}
    u\\-\overline{v}
    \end{pmatrix}.
\end{align}
    Then $\Phi$ belongs to $W^{1,\infty}(D^2,\mathbb{C}^2)$ and it is smooth away from the points $p_k$.
%Moreover $\nabla \Phi$ vanishes at isolated points away from the singularities, since the same holds for $u$. 
    Recall that we are identifying $\mathbb{C}^2$ with the algebra of quaternions with basis elements $1,I,J,K$. %(with $I$ corresponding to the complex multiplication).
    Then the identity $\nabla^\perp v=g\nabla u$ implies that
    \begin{align}\label{eq: derivatives-relation}
    \partial_x\Phi=\overline{g}J\partial_y\Phi.
    \end{align}
    We deduce that $\Phi$ is weakly conformal (as $\langle \partial_x\Phi, \partial_y\Phi\rangle=0$ and $\lvert\partial_x\Phi\rvert^2=\lvert\partial_y\Phi\rvert^2$ a.e.) and Lagrangian (as $\langle \partial_x\Phi,I\partial_y\Phi\rangle=0$ a.e.).\\
    We claim that the Lagrangian angle of $\Phi$ is given by $\overline{g}$. With the help of \eqref{eq: Lagrangian angle} we compute (for any point where $\lvert \nabla \Phi\rvert\neq 0)$
    \begin{align}
        e^{-2\lambda}\det[\partial_x\Phi,\partial_y\Phi]=&e^{-2\lambda}(-\partial_xu\partial_y\overline{v}+\partial_x\overline{v}\partial_y u)\\
        \nonumber
        =&e^{-2\lambda}\left(\partial_x u\overline{g}\partial_x\overline{u}+\overline{g}\partial_y \overline{u}\partial_y u\right)=\overline{g},
    \end{align}
    where $e^{2\lambda}=\lvert\partial_x\Phi\rvert^2=\lvert\partial_y\Phi\rvert^2=2\lvert\partial_x u\rvert^2=2\lvert\partial_y u\rvert^2$.
    % Notice that \eqref{eq: derivatives-relation} implies
    % \begin{align}
    %     \Delta \Phi=g\nabla\overline{g}\cdot\nabla\Phi.
    % \end{align}
    Since $\overline{g}$ satisfies $\operatorname{div}(i\overline{g}\nabla g)=0$,
    we conclude that $\Phi$ is Hamiltonian stationary.\\
    As the gradient of the map $u$ only vanishes at isolated points, the same holds for $\Phi$. Since the Lagrangian angle is continuous away from the points $p_k$, we conclude that $\Phi$ is a branched immersion away from the points $p_k$.
    %for any smooth $f$ on $\mathbb{C}^2$ such that $f$ vanishes in a neighbourhood of $\Phi(\partial D^2)$ we have
    %  \begin{align}\label{eq: computation-Lemma-I}
    %     \int_{D^2}\langle d\Phi; d(I\nabla f(\Phi)\rangle=-\int_{D^2}\langle\Delta \Phi, I\nabla f(\Phi)\rangle
    %     =-\int_{D^2}\langle ig d\overline{g}; d(f\circ \Phi)\rangle=0,
    % \end{align}
    % thus $\Phi$ is Hamiltonian stationary.
    %%%%%%
    % We claim that $\Phi$ is weakly conformal. In fact the identity $\nabla^\perp v=g\nabla u$ implies
    % \begin{align}
    %     \langle \partial_x\Phi, \partial_y\Phi\rangle=\langle \partial_x u,\partial_y u\rangle+\langle\partial_x\overline{v},\partial_y\overline{v}\rangle=\langle \partial_x u,\partial_y u\rangle+\langle g\partial_y u, -g\partial_x u\rangle=0
    % \end{align}
    % and
    % \begin{align} \lvert\partial_x\Phi\rvert^2=\lvert\partial_x u\rvert^2+\lvert \partial_x \overline{v}\rvert^2=\lvert \partial_y \overline{v}\rvert^2+\lvert \partial_y u\rvert^2=\lvert\partial_y\Phi\rvert^2.
    % \end{align}
    Next we observe that the map $v$, determined up to an additive constant, can be chosen to be
    \begin{align}\label{eq: expression-v}
        v=i\frac{\sqrt{j}}{\sqrt{j+1}}\rho^{\sqrt{j^2+j}}g^{j+1}=i\frac{\sqrt{j}}{\sqrt{j+1}}\nu_j\circ\phi,
    \end{align}
    where
        \begin{align}
        \nu_j: \mathbb{R}^2\to\mathbb{R}^2,\quad x\mapsto \lvert x\rvert^{\sqrt{j^2+j}-(j+1)}x^{j+1}.
    \end{align}
    Observe that $\nu_j\in C^{\sqrt{j^2+j}}(\mathbb{R}^2)$. As $\phi$ is smooth up to the boundary, $v\in C^{\sqrt{j^2+j}}(D^2)$. Since $u$ also lies in $C^{\sqrt{j^2+j}}(D^2)$ we conclude that $\Phi\in C^{\sqrt{j^2+j}}(D^2)$.
    Moreover Step 4 in the proof of Theorem \ref{thm: solving the equation} implies that $\rho^\frac{\sqrt{j^2+j}}{j}g$ is smooth on $\partial D^2$, therefore also $v=i\sqrt{\frac{j}{j+1}}(\rho^\frac{\sqrt{j^2+j}}{j}g)^j$ is smooth on $\partial D^2$. Since also $u$ has a smooth trace on $\partial D^2$, we conclude that $\Phi$ has a smooth trace on $\partial D^2$.\\
    %As $\rho$ is continuous up to the boundary (see Step 5 in the proof of Theorem \ref{thm: solving the equation}) and vanishes at the singularities of $g$, $v$ is also continuous up to the boundary.\\
    Finally, comparing the explicit expression for $\Phi$ (given by \eqref{eq: expression-u} and \eqref{eq: expression-v}) with \eqref{eq: determining-singularity-type}, we see that at any point $p_k$, $\Phi$ has a singularity of type $\Sigma_{j,j+1}$.
    %%%%%%%%%%%%
    % Finally we notice that at any zero of $\phi$, $\Phi$ has a Schoen-Wolfson singularity of type $\Sigma_{1,2}$. In fact, around any point $p_i$, $\phi$ can be written as $\phi(z)=(z-p_i)h(z)$ for some smooth function $h$ with $h(p_i)\neq 0$, so that around $p_i$ we have
    % \begin{align}
    %     \Phi(z)=\begin{pmatrix}
    % \lvert z-p_i\rvert^{\sqrt{2}}\frac{z-p_i}{\lvert z-p_i\rvert}\lvert h(z)\rvert^{\sqrt{2}-1}h(z) \\
    % \frac{i}{\sqrt{2}}\lvert z-p_i\rvert^{\sqrt{2}}\left(\frac{z-p_i}{\lvert z-p_i\rvert}\right)^{-2}\lvert h(z)\rvert^{\sqrt{2}-2}\overline{h(z)}^2
    % \end{pmatrix},
    % \end{align}
    % i.e.
    % \begin{align}
    %     \Phi(z)=\frac{\sqrt{3}}{\sqrt{2}}U\cdot\Phi_{1,2}(z-p_i) h(p_i)\lvert h(p_i)\rvert^{\sqrt{2}-1}(1+\mathscr{O}(\lvert z-p_i\rvert)),
    % \end{align}
    % where $\Phi_{1,2}$ is the map defined in \eqref{eq: Phi-p-q} and $U$ is the unitary matrix
    % \begin{align}
    %     U=\begin{pmatrix}
    %         1&0\\0&\frac{h(p_i)}{\lvert h(p_i)\rvert}
    %     \end{pmatrix}.
    % \end{align}
    %%%%%%%%%%
    % show that $\Phi$ is continuous up to the boundary. We already know from Theorem \ref{thm: solving the equation} that $u$ is continuous up to the boundary.
    % Observe that the map $v$, determined up to an additive constant, can be chosen to be
    % \begin{align}
    %     v=\frac{i}{\sqrt{2}}\rho^{\sqrt{2}}g^{2}.
    % \end{align}
    % As $\rho$ is continuous up to the boundary (see Step 5 in the proof of Theorem \ref{thm: solving the equation}) and vanishes at the singularities of $g$, $v$ is also continuous up to the boundary.
\end{proof}
%%%%%%%%% TO KEEP
% \begin{rem}
%     In Step 1 of the proof of Theorem \ref{thm: solving the equation}, instead of defining $\phi$ as we did, one can set $\phi(z)=\exp\left(\frac{-1}{(z+1)^s}\right)\sin\left(\frac{1}{(z+1)^\frac{s}{2}}\right)$.
% \end{rem}

\section{Related questions}
In this section we discuss condition \eqref{eq: Neumann-bdy-cond} and how it relates to our construction.\\
First we recall that if a map $g\in W^{1,p}(D^2,\mathbb{S}^1)$ for some $p>1$ satisfies
\begin{align}
    \operatorname{div}(i\overline{g}\nabla g)=0,
\end{align}
then there exists $G\in W^{1,p}(D^2,\mathbb{R})$ such that
\begin{align}\label{eq: property-G}
    -i\overline{g}\nabla g=\nabla^\perp G.
\end{align}
The condition
$i\overline{g}\partial_\nu g=0$ on $\partial D^2$ can be given the following meaning: we require the map $G$ to have constant trace on $\partial D^2$.\\
As observed in \cite{Schoen-SLS}, this condition arises naturally in Lagrangian free-boundary problems where the constraint manifold is a complex surface (see also Remark IV.4 in \cite{GOR}).
Originally we were hoping to obtain a map as in Theorem \ref{thm: main theorem} satisfying in addition $\overline{g}\partial_\nu g=0$ on $\partial D^2$ in the sense specified above.
Unfortunately our method does not allow to obtain such an example.
The proof of Theorem \ref{thm: solving the equation} in Section 3, in fact, is based on the construction of a non-zero holomorphic function $\phi$ on $D^2$ with infinitely many zeros and finite Dirichlet energy. The $\mathbb{S}^1$-valued map $g$ is then obtained as $g=\frac{\phi}{\lvert \phi\rvert}$, so that the function $G$ appearing in \eqref{eq: property-G} can be chosen to be $\log\lvert \phi\rvert$. To get an example satisfying $\overline{g}\partial_\nu g=0$,
we would need to have $\lvert \phi\rvert$ constant on $\partial D^2$, but this is not possible, because of the next lemma.
\begin{lem}\label{lem: no-desired-hol-fct}
    If an holomorphic map $\phi$ on $D^2$ satisfies the following properties
\begin{enumerate}
    \item $\phi$ has infinitely many zeros,
    \item $\displaystyle\int_{D^2}\lvert\nabla \phi\rvert^2<\infty$,
    \item $\lvert\phi\rvert$ is constant on $\partial D^2$,
\end{enumerate}
then $\phi\equiv 0$.
\end{lem}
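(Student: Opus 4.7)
The plan is to argue by contradiction: assume $\phi \not\equiv 0$, and derive a contradiction from the finiteness of the topological degree of the boundary trace $\phi^\ast$.

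First I would show $\phi \in H^2(D^2)$. Expanding $\phi(z) = \sum_n a_n z^n$ gives $\int_{D^2}|\nabla\phi|^2\,dA = 2\pi \sum_n n|a_n|^2$; finite Dirichlet energy therefore yields $\sum|a_n|^2 < \infty$, hence $\phi \in H^2(D^2)$ and admits nontangential boundary values $\phi^\ast \in L^2(\partial D^2)$ with $|\phi^\ast| = c$ a.e.\ on $\partial D^2$ by hypothesis (3). The case $c = 0$ immediately forces $\phi \equiv 0$ (an $H^2$ function is determined by its boundary values), so assume $c > 0$; after rescaling by $c$ the trace $\phi^\ast$ is an $H^{1/2}$ map from $\partial D^2$ to $S^1$, which has a well-defined integer topological degree $N$. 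A Stokes' theorem computation gives $N = \tfrac{1}{\pi}\int_{D^2}|\phi'|^2\,dA \geq 0$.

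The next step is to show, using truncated Blaschke products, that $N \geq n$ for every $n \geq 1$. Enumerate the infinitely many zeros $\{p_k\}$ of $\phi$ and form the finite Blaschke product $B_n(z) = \prod_{k=1}^n (z-p_k)/(1-\bar p_k z)$, which is smooth on $\overline{D}^2$, unimodular on $\partial D^2$, and has boundary degree exactly $n$. The quotient $\psi_n := \phi/B_n$ is holomorphic on $D^2$ because the zeros at $p_1,\dots,p_n$ cancel, and I would verify that $\psi_n \in W^{1,2}(D^2)$: away from the $p_k$ this follows since $|B_n|$ is bounded below, and near each $p_k$ the matched orders of vanishing of $\phi$ and $B_n$ make $\psi_n$ holomorphic and smooth. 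Since $|\psi_n^\ast| = 1$ on $\partial D^2$, the trace $\psi_n^\ast$ is again an $H^{1/2}$ map to $S^1$, and its degree $d_n$ is a non-negative integer (non-negativity again from the Stokes identity applied to the holomorphic self-map $\psi_n$).

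The contradiction then comes from the multiplicativity of the topological degree of $H^{1/2}$ maps into $S^1$ under pointwise multiplication: the factorization $\phi^\ast = B_n^\ast \cdot \psi_n^\ast$ yields
\begin{align*}
    N = \deg \phi^\ast = \deg B_n^\ast + \deg \psi_n^\ast = n + d_n \geq n
\end{align*}
for every $n \geq 1$, contradicting that $N$ is a finite integer. The main technical point to check along the way is that $\psi_n \in W^{1,2}(D^2)$, so that $\psi_n^\ast$ is a bona fide $H^{1/2}$ map with a well-defined degree; this is a routine verification given that $\phi \in W^{1,2}(D^2)$ and $B_n$ is smooth on $\overline{D}^2$ with zeros matched by those of $\phi$.
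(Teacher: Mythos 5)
Your proof is correct, but it follows a genuinely different route from the paper's. The paper proves an auxiliary lemma in the spirit of Schoen--Uhlenbeck and Brezis--Nirenberg: the harmonic extension of an $H^{\frac{1}{2}}(\partial D^2,\mathbb{S}^1)$ map has modulus converging to $1$ uniformly as $\lvert z\rvert\to 1$. Since a holomorphic $\phi$ with finite Dirichlet energy is the harmonic extension of its own trace, this confines all zeros of $\phi$ to a compact subset $\overline{B_r(0)}\subset D^2$, where a nonzero holomorphic function can have only finitely many of them --- no degree theory is needed, only a direct estimate with the Poisson kernel and the uniform integrability of $\frac{\lvert\psi(\alpha)-\psi(\sigma)\rvert^2}{\lvert e^{i\alpha}-e^{i\sigma}\rvert^2}$. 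You instead invoke the Brezis--Nirenberg degree for $H^{\frac{1}{2}}(\mathbb{S}^1,\mathbb{S}^1)$ maps, its representation as $\frac{1}{\pi}\int_{D^2}\det\nabla U$ for any $W^{1,2}$ extension (which for holomorphic $\phi$ gives the non-negativity $N=\frac{1}{\pi}\int_{D^2}\lvert\phi'\rvert^2\geq 0$), and additivity of the degree under pointwise multiplication, to conclude $N\geq n$ for every $n$ after dividing out finite Blaschke products. The computations you flag as needing verification ($\psi_n=\phi/B_n\in W^{1,2}$, since $B_n$ is smooth on $\overline{D}^2$ with $\lvert B_n\rvert$ bounded below off its zeros and the singularities at $p_1,\dots,p_n$ are removable; additivity of the degree, which for your purposes only needs to be checked against the smooth factor $B_n$) all go through. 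Your argument leans on heavier machinery --- well-definedness and additivity of the $H^{1/2}$ degree rest on density of smooth maps and continuity of the degree, results from the very papers \cite{BN-I}, \cite{BN-II} the paper cites --- but in exchange it yields a quantitative bound: the number of zeros counted with multiplicity is at most $\frac{1}{\pi c^2}\int_{D^2}\lvert\phi'\rvert^2$, whereas the paper's compactness argument gives finiteness with no explicit count.
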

Lemma \ref{lem: no-desired-hol-fct} is a consequence of the following result, based on \cite{SU-bdry}, Section 4, and whose idea has been widely used in \cite{BN-I} and \cite{BN-II}.
\begin{lem}\label{lem: Brezis-Nirenberg}
Let $\psi\in H^\frac{1}{2}(\partial D^2,\mathbb{S}^1)$ and let $\phi$ be its harmonic
extension in $D^2$. Then $\lvert \phi(z)\rvert\to 1$ uniformly as $\lvert z\rvert\to 1$.    
\end{lem}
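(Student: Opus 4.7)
The plan is to derive a closed-form expression for $1 - |\phi|^2$ in terms of the Poisson kernel, bound it by the Poisson extension of an explicit $L^1$ function whose $L^1$-norm is the $H^{1/2}$-seminorm squared, and then invoke the uniform absolute continuity of the integral of an $L^1$ function on the compact circle $\partial D^2$.

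Since $\phi$ is the harmonic extension of $\psi$, I would first invoke the Poisson integral representation $\phi(z) = \int_{\partial D^2} P(z,\zeta)\psi(\zeta)\,d\sigma(\zeta)$ with $P(z,\zeta) = (1-|z|^2)/(2\pi|z-\zeta|^2)$. Combining this with $\int P(z,\zeta)\,d\sigma = 1$ and $|\psi| \equiv 1$, a direct expansion yields the clean identity
\begin{align*}
    1 - |\phi(z)|^2 = \frac{1}{2}\int_{\partial D^2}\int_{\partial D^2} P(z,\zeta) P(z,\zeta')\,|\psi(\zeta) - \psi(\zeta')|^2\, d\sigma(\zeta)\, d\sigma(\zeta').
\end{align*}
In particular this re-derives $|\phi| \leq 1$, so the point is to make the right hand side uniformly small as $|z| \to 1$.

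To bring in the $H^{1/2}$-seminorm, I would rewrite $|\psi(\zeta) - \psi(\zeta')|^2 = \frac{|\psi(\zeta) - \psi(\zeta')|^2}{|\zeta - \zeta'|^2}|\zeta - \zeta'|^2$ and combine the triangle inequality $|\zeta - \zeta'|^2 \leq 2(|z-\zeta|^2 + |z-\zeta'|^2)$ with the pointwise identity $P(z,\zeta)|z - \zeta|^2 = (1 - |z|^2)/(2\pi)$. After symmetrising in $\zeta$ and $\zeta'$, this produces
\begin{align*}
    1 - |\phi(z)|^2 \leq \frac{1-|z|^2}{\pi}\int_{\partial D^2} P(z,\zeta) F(\zeta)\, d\sigma(\zeta),\qquad F(\zeta) := \int_{\partial D^2} \frac{|\psi(\zeta) - \psi(\zeta')|^2}{|\zeta - \zeta'|^2}\, d\sigma(\zeta').
\end{align*}
Since $\psi \in H^{1/2}(\partial D^2, \mathbb{S}^1)$, the function $F$ lies in $L^1(\partial D^2)$ with $\|F\|_{L^1} = [\psi]^2_{H^{1/2}}$.

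To conclude, writing $z = re^{i\theta_0}$ and $z_0 = e^{i\theta_0}$, I would split $\partial D^2$ into the arc $\{|\zeta - z_0| < \eta\}$ and its complement. On the far part, once $1 - r < \eta/2$ one has $P(z,\zeta) \leq C(1-r)/\eta^2$, so the contribution is bounded by $C(1-r)^2 \|F\|_{L^1}/\eta^2$, which vanishes as $r \to 1$. On the near part, the uniform bound $P(z,\zeta) \leq C/(1-r)$ combined with the prefactor $1 - |z|^2$ leaves a universal multiple of $\int_{|\zeta - z_0| < \eta} F\, d\sigma$. The proof then finishes by the standard fact, provable by a short compactness argument using dominated convergence on $\partial D^2$, that $\sup_{z_0 \in \partial D^2}\int_{|\zeta - z_0| < \eta} F\, d\sigma \to 0$ as $\eta \to 0$. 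The only delicate point is ensuring that both contributions decay \emph{uniformly} in the basepoint $z_0$, which is exactly what the compactness of $\partial D^2$ furnishes.
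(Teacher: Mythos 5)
Your proof is correct, and it reaches the conclusion by a recognizably different decomposition than the paper's. The paper estimates $\operatorname{dist}^2(\phi(r,\theta),\mathbb{S}^1)$ by comparing $P_r\ast\psi(\theta)$ with the boundary values $\psi(\sigma)$ averaged over the small arc $B_{1-r}(\theta)$, applies Jensen's inequality, and then splits the resulting double integral into the regions $\alpha\in B_{2(1-r)}(\theta)$ and its complement, bounding each piece by the Gagliardo integrand $\lvert\psi(\alpha)-\psi(\sigma)\rvert^2/\lvert e^{i\alpha}-e^{i\sigma}\rvert^2$ integrated over a set whose measure shrinks with $1-r$. You instead start from the exact bilinear identity $1-\lvert\phi(z)\rvert^2=\tfrac12\iint P(z,\zeta)P(z,\zeta')\lvert\psi(\zeta)-\psi(\zeta')\rvert^2\,d\sigma\,d\sigma'$ (valid precisely because $\lvert\psi\rvert\equiv 1$), which removes the need for the auxiliary averaging and for Jensen, and then use $P(z,\zeta)\lvert z-\zeta\rvert^2=(1-\lvert z\rvert^2)/(2\pi)$ to collapse the double integral to a single Poisson integral of the $L^1$ density $F(\zeta)=\int\lvert\psi(\zeta)-\psi(\zeta')\rvert^2/\lvert\zeta-\zeta'\rvert^2\,d\sigma(\zeta')$ weighted by the vanishing prefactor $1-\lvert z\rvert^2$. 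Both arguments ultimately rest on the same two facts --- the Gagliardo integrand of an $H^{1/2}$ map is in $L^1$ of the product, and integrals of a fixed $L^1$ function over sets of small measure are uniformly small --- so the analytic content is the same; your route buys a cleaner algebraic starting point and a one-variable near/far splitting, at the cost of only controlling $1-\lvert\phi\rvert^2$ rather than the full distance of $\phi(z)$ to the nearby boundary values (which is all the lemma asks for, so nothing is lost). One tiny remark: your final step needs only the absolute continuity of $\int F$ over arcs of length $2\eta$, which is automatic for $F\in L^1$; no compactness argument is actually required.
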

\begin{proof}
Since ${\phi}$ is harmonic, $\phi(r,\theta)=P_r\ast \psi(\theta)$, where for any $r\in (0,1)$ the Poisson kernel $P_r$ is given by
\begin{align}
    P_r(\theta)=\frac{1-r^2}{2\lvert re^{i\theta}-1\rvert^2}.
\end{align}
Let $\theta\in [0,2\pi)$, $r\in (0,1)$. We estimate (exploiting the fact that $\lvert\phi\rvert=1$ on $\partial D^2$)
\begin{align}\label{eq: dist-estimate}
    \operatorname{dist}^2(\phi(r,\theta),\mathbb{S}^1)\leq& \fint_{B_{1-r}(\theta)}\lvert P_r\ast\psi(\theta)-\psi(\sigma)\rvert^2d\sigma\\
    \nonumber
    =&\fint_{B_{1-r}(\theta)}\left\lvert \int_0^{2\pi}P_r(\alpha-\theta)(\psi(\alpha)-\psi(\sigma))d\alpha\right\rvert^2 d\sigma\\
    \nonumber
    \leq &\fint_{B_{1-r}(\theta)}\int_0^{2\pi} P_r(\alpha-\theta)\lvert\psi(\alpha)-\psi(\sigma)\rvert^2d\alpha d\sigma,
\end{align}
where $B_{1-r}(\theta)$ denotes a ball in $\mathbb{R}$.\\
Notice that $1-r<\lvert re^{i\varphi}-1\rvert$ for any $\varphi$, therefore $\lvert e^{i\alpha}-e^{i\sigma}\rvert^2<9\lvert re^{i(\alpha-\sigma)}-1\rvert^2$ if $\sigma\in B_{1-r}(\theta)$, $\alpha\in B_{2(1-r)}(\theta)$. Thus
\begin{align}\label{eq: first integral}
    &\fint_{B_{1-r}(\theta)}\int_{B_{2(1-r)}(\theta)}\lvert P_r(\alpha-\theta)(\psi(\alpha)-\psi(\sigma)\rvert^2d\alpha d\sigma\\
    \nonumber
    \leq&
    \frac{9}{2}(1-r^2)\fint_{B_{1-r}(\theta)}\int_{B_{2(1-r)}(\theta)}\frac{\lvert\psi(\alpha)-\psi(\sigma)\rvert^2}{\lvert e^{i\alpha}-e^{i\sigma}\rvert^2}d\alpha d\sigma\\
    \nonumber
    =&\frac{9}{4}(1+r)\int_{B_{1-r}(\theta)}\int_{B_{2(1-r)}(\theta)}\frac{\lvert\psi(\alpha)-\psi(\sigma)\rvert^2}{\lvert e^{i\alpha}-e^{i\sigma}\rvert^2}d\alpha d\sigma.
\end{align}
On the other hand
there holds $r\lvert e^{i\varphi}-1\rvert\leq \lvert re^{i\varphi}-1\rvert$ for any $\varphi$, therefore
\begin{align}\label{eq: second integral}
    &\fint_{B_{1-r}(\theta)}\int_{B_{2(1-r)}^c(\theta)} P_r(\alpha-\theta)\lvert\psi(\alpha)-\psi(\sigma)\rvert^2d\alpha d\sigma\\
    \nonumber
    \leq&
    \frac{1-r^2}{8r^2(1-r)}\int_{B_{1-r}(\theta)}\int_{B_{2(1-r)}^c(\theta)}\frac{\lvert \psi(\alpha)-\psi(\sigma)\rvert^2}{\lvert e^{i(\alpha-\theta)}-1\rvert^2}d\sigma d\alpha\\
    \nonumber
    \leq& \frac{1+r}{8r^2}\int_{B_{1-r}(\theta)}\int_{B_{2(1-r)}^c(\theta)}\frac{\lvert \psi(\alpha)-\psi(\sigma)\rvert^2}{\lvert e^{i\alpha}-e^{i\sigma}\rvert^2}d\alpha d\sigma,
\end{align}
where $B_{2(1-r)}^c(\theta)$ denotes the complement of $B_{2(1-r)}(\theta)$ in $\quot{\mathbb{R}}{2\pi\mathbb{Z}}$, and in the last step we used the fact that if $\alpha\in B_{2(1-r)}^c(\theta)$ and $\sigma\in B_{1-r}(\theta)$, then
\begin{align}
    \lvert e^{i\alpha}-e^{i\sigma}\rvert\leq \lvert\alpha-\sigma\rvert\leq \frac{3}{2}\lvert\alpha-\theta\rvert\leq \frac{3\pi}{4}\lvert e^{i(\alpha-\theta)}-1\rvert.
\end{align}
Notice that the expressions in \eqref{eq: first integral} and \eqref{eq: second integral} tend to zero uniformly as $r$ approaches 1, in fact $\frac{\lvert\psi(\alpha)-\psi(\sigma)\rvert^2}{\lvert e^{i\alpha}-e^{i\sigma}\rvert^2}$ lies in $L^1(\partial D^2\times\partial D^2)$ and thus is uniformly integrable, as $\psi\in H^\frac{1}{2}(\partial D^2)$. Therefore \eqref{eq: dist-estimate} implies that $\operatorname{dist}(\phi(r, \theta), \mathbb{S}^1)$ tends to zero uniformly (as a function of $\theta$) as $r$ tends to $1$.
\end{proof}
\begin{proof}[Proof of Lemma \ref{lem: no-desired-hol-fct}]
Assume by contradiction that $\phi$ is a non-zero holomorphic map on $D^2$ satisfying (1), (2) and (3).
% Notice that by maximum principle $\lvert\phi\rvert=c$ on $\partial D^2$ for some $c\neq 0$.
Multiplying $\phi$ by a constant we may assume that $\lvert\phi\rvert\equiv 1$ on $\partial D^2$. Since $\phi\in W^{1,2}(D^2)$, its trace on $\partial D^2$ lies in $H^\frac{1}{2}(\partial D^2)$ and therefore defines a map in $H^\frac{1}{2}(\partial D^2,\mathbb{S}^1)$ as in Lemma \ref{lem: Brezis-Nirenberg}. We deduce that there exists some $r\in (0,1)$ such that $\lvert \phi\rvert>\frac{1}{2}$ in $D^2\smallsetminus B_r(0)$, thus all the zeros of $\phi$ must lie in $B_r(0)$. But since $\phi$ is holomorphic in $D^2$, its zeros are isolated (they can only accumulate at the boundary of $D^2$), therefore $\phi$ can only have finitely many zeros in $\overline{B_r(0)}$. This contradicts property (1).
\end{proof}

\printbibliography

\end{document}